\newtheorem{thm}{Theorem}
\newtheorem{prop}[thm]{Proposition}
\newtheorem{lem}[thm]{Lemma}
\theoremstyle{definition}
\newtheorem{dfn}[thm]{Definition}
\newtheorem{rem}[thm]{Remark}
\newtheorem{prob}[thm]{Problem}
\newtheorem{conj}[thm]{Conjecture}
\numberwithin{thm}{section}
\numberwithin{equation}{section}
\newcommand{\step}[1]{\medskip\noindent\textit{#1.}}
\title[Euler characteristics]{Euler characteristics of\\collapsing Alexandrov spaces}
\author[T. Fujioka]{Tadashi Fujioka}
\address{Department of Mathematics, Osaka University, Toyonaka, Osaka 560-0043, Japan}
\email{fujioka@cr.math.sci.osaka-u.ac.jp, tfujioka210@gmail.com}
\date{\today}
\subjclass[2020]{53C20, 53C21, 53C23}
\keywords{Lower curvature bound, Alexandrov spaces, extremal subsets, collapse, Euler characteristic}
\thanks{Supported by JSPS KAKENHI Grant Number 22KJ2099 (22J00100)}
\begin{document}

\begin{abstract}
We prove that the Euler characteristic of a collapsing Alexandrov space (in particular, a Riemannian manifold) is equal to the sum of the products of the Euler characteristics with compact support of the strata of the limit space and the Euler characteristics of the fibers over the strata.
This was conjectured by Semyon Alesker.
\end{abstract}

\maketitle

\section{Introduction}\label{sec:intro}

In this paper we consider the following problem.
We fix an upper bound $n$ for dimension and a lower bound $\kappa$ for sectional curvature.

\begin{prob}\label{prob:main}
Let $X$ be a $k$-dimensional compact Alexandrov space, where $k\le n$.
Suppose $\mu=\mu(X)>0$ is small enough and let $M$ be an $n$-dimensional Alexandrov space (in particular, a Riemannian manifold) that is $\mu$-close to $X$ with respect to the Gromov-Hausdorff distance.
Describe the topology of $M$ in terms of the geometry of $X$.
\end{prob}

An Alexandrov space is a metric space with a lower sectional curvature bound, introduced by Burago-Gromov-Perelman \cite{BGP}.
The motivation for the above problem stems from the well-known fact that the family of $n$-dimensional Riemannian manifolds with sectional curvature $\ge\kappa$ and diameter $\le D$ is precompact in the Gromov-Hausdorff topology.
The limit objects are Alexandrov spaces with curvature $\ge\kappa$ and dimension $\le n$.
More generally, the family of Alexandrov spaces with curvature $\ge\kappa$, diameter $\le D$, and dimension $\le n$ is compact in the Gromov-Hausdorff topology.
Therefore, in principle, the solution to the above problem allows us to understand the topology of spaces in these (pre)compact families by covering them by a finite number of neighborhoods of limit spaces.

In the case $k=n$, Perelman's stability theorem \cite{Per:alex} (cf.\ \cite{K:stab}) solves the above problem completely, that is, $M$ is homeomorphic to $X$.
Hence we will consider the case $k<n$, called \textit{collapse}.
Although collapsing in low dimensions has been well studied (\cite{SY:3dim}, \cite{Y:4dim}, \cite{MY:3dim}, \cite{AKP'}), so far there is no theory in general dimensions.
The few exceptions are special cases when $X$ satisfies some regularity conditions (\cite{Y:col}, \cite{Per:col}), where some fibration structures of $M$ over $X$ are obtained.
In general, it is expected that $M$ admits some singular fibration structure over $X$, where the singular fibers arise over the singular strata of $X$.
See also \cite{CFG} and the references therein for collapsing under two-sided bounds on sectional curvature.

Recently Alesker \cite{A:conj} proposed some conjectures on collapsing Riemannian manifolds and Alexandrov spaces in terms of their intrinsic volumes (also known as Lipschitz-Killing curvatures).
For a closed $n$-dimensional Riemannian manifold $M$, the \textit{$i$-th intrinsic volume}, denoted by $V_i(M)$ ($0\le i\le n$), is a geometric quantity defined as follows (see \cite{A:conj} for the precise definition and references).
We first embed $M$ isometrically into Euclidean space (by the Nash embedding, for example) and consider the volume of its $\varepsilon$-neighborhood.
Then it is a polynomial in $\varepsilon\ll1$, and its coefficients, after appropriate normalization, turn out to be independent of the embedding, which are called the intrinsic volumes of $M$.
In fact they can be defined directly in terms of integrals of the Riemann curvature tensor of $M$.
For example, $V_n(M)$ is the volume of $M$, $V_0(M)$ is the Euler characteristic of $M$, and $V_{n-2}(M)$ is proportional to the integral of the scalar curvature of $M$ (and $V_i(M)$ vanishes if $n-i$ is odd).

The following is a brief summary of part of Alesker's conjecture.

\begin{conj}[Alesker \cite{A:conj}]\label{conj:A}
Let $M_j$ be a sequence of $n$-dimensional Riemannian manifolds with sectional curvature $\ge\kappa$.
Suppose $M_j$ converges to a compact Alexandrov space $X$.
\begin{enumerate}
\item If $M_j$ does not collapse, then $\lim_{j\to\infty}V_i(M_j)$ exists.
\item If $M_j$ collapses, then
\begin{enumerate}
\item there is a subsequence such that $\lim_{j\to\infty}V_i(M_j)$ exists;
\item the limit value will be written as
\[\sum_{E\in\mathcal E}F(E)\cdot V_i(E),\]
where $\mathcal E$ denotes the set of the strata of $X$, $F$ is an integer valued function defined on $\mathcal E$, and $V_i(E)$ is the ``$i$-th intrinsic volume'' of $E$.
\end{enumerate}
\end{enumerate}
\end{conj}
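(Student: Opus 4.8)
The plan is to reduce Conjecture \ref{conj:A} to two geometric mechanisms: the localization of the Lipschitz--Killing curvature of $M_j$ onto the strata of $X$, and the scaling behaviour of the intrinsic volumes of the collapsing fibers. The organizing identity is the Weyl tube/product formula: for Riemannian products one has $V_i(A\times B)=\sum_{j+k=i}V_j(A)V_k(B)$, while under a homothety $g\mapsto\lambda^2g$ one has $V_i\mapsto\lambda^iV_i$; more generally, for a space of small diameter $\delta$ with curvature bounded below, $|V_k|$ is controlled by a positive power of $\delta$ when $k\ge1$, whereas $V_0=\chi$ is a scale-invariant topological quantity. Thus, if near a stratum $E$ the collapse $M_j\to X$ looked like a Riemannian product $E\times F_E$ with the fiber metric shrinking to zero, these two facts alone would force the contribution of a neighborhood of $E$ to $V_i(M_j)$ to converge to $V_i(E)\cdot V_0(F_E)=V_i(E)\cdot\chi(F_E)$, every higher fiber term $V_k(F_E)$ with $k\ge1$ being annihilated by the collapse. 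This already identifies the integer-valued function of Conjecture \ref{conj:A} as $F(E)=\chi(F_E)$, the Euler characteristic of the fiber over $E$, and matches the $V_0$ formula $\chi(M)=\sum_E\chi_c(E)\chi(F_E)$ announced in the abstract.

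To make this rigorous I would proceed in four steps. First, decompose $X$ into its primitive extremal strata $\{E\}$ and, over each $E$, establish that $M_j$ carries a fiber-bundle structure with collapsing fiber $F_E$, necessarily a singular fibration over the singular strata. Second, localize: writing $V_i(M_j)=\sum_E\int_{U_E}d\Phi_i^{(j)}$ with $\Phi_i^{(j)}$ the $i$-th Lipschitz--Killing curvature measure of $M_j$ and $U_E$ the preimage of a neighborhood of $E$, show that $\Phi_i^{(j)}$ concentrates on the strata as $j\to\infty$. Third, on each stratum integrate the curvature density first along the collapsing fibers; by the product and scaling behaviour above, the fiber integral should converge to $\chi(F_E)$ times the intrinsic-volume density of the base, producing $V_i(E)\cdot\chi(F_E)$. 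Fourth, define $V_i(E)$ intrinsically as the curvature measure of the (lower-dimensional, possibly boundaried) Alexandrov space $E$---a definition that is itself part of the problem in Conjecture \ref{conj:A}---and check that the object built in the third step equals it. Summing over strata yields (2b); the subsequential convergence in (2a) follows from a uniform bound $|V_i(M_j)|\le C(n,\kappa,D)$, obtained from Bishop--Gromov volume comparison and the tube formula, together with Bolzano--Weierstrass.

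Part (1) is the non-collapsing specialization of the same scheme: when $\dim X=n$ the fibers are points, $\chi(F_E)=1$, and the formula collapses to $\lim_jV_i(M_j)=\sum_EV_i(E)=V_i(X)$, so the assertion reduces to the continuity of $V_i$ under non-collapsing convergence. Here Perelman's stability theorem \cite{Per:alex} (cf.\ \cite{K:stab}) provides a homeomorphism $M_j\cong X$ and, more usefully, geometric control strong enough to pass the curvature measures $\Phi_i^{(j)}$ to a weak limit, a well-defined curvature measure on $X$; this yields the existence asserted in (1).

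I expect the decisive obstacle to be the first step---the singular fibration structure over the strata in arbitrary dimension. As the introduction notes, no such theory exists beyond low dimensions or under extra regularity \cite{Y:col,Per:col}, and understanding the collapsing fibers over the singular strata, exactly where the integer $\chi(F_E)$ can jump, is precisely what is missing. A second serious difficulty is that only a lower curvature bound is imposed, so the curvature of $M_j$ may concentrate positively; one must rule out escape of curvature mass and show that $\Phi_i^{(j)}$ behaves well, which is where the comparison bounds and the signed (as opposed to absolute) nature of the Lipschitz--Killing measures are essential. This also explains why the case $i=0$ is reachable without resolving the fibration problem: as a topological invariant $V_0$ can be computed through Perelman's topological stability over each stratum together with the additivity of $\chi_c$ and the multiplicativity $\chi=\chi(\text{base})\,\chi(\text{fiber})$, bypassing the curvature-convergence and bundle issues that block the general $V_i$.
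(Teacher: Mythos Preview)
First, note that Conjecture \ref{conj:A} is stated in the paper as a \emph{conjecture}, not a theorem; the paper does not prove it in full. What the paper actually establishes is the single case $i=0$ of part (2b), namely Theorem \ref{thm:main}. Your proposal is therefore not competing with a proof of the full conjecture but at best with the paper's argument for that one case.

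For general $V_i$, your four-step programme is a plausibility outline, not a proof, and you yourself name the gaps: the singular fibration structure of $M_j$ over the strata of $X$ is not known in general dimension; the intrinsic volumes $V_i(E)$ of the strata are not even defined (as the paper explicitly remarks after Conjecture \ref{conj:A}); and weak convergence of the Lipschitz--Killing curvature measures under collapse with only a lower curvature bound is open. None of these can currently be filled in, so steps one through four remain conjectural for $i\ge1$. Similarly, your argument for (1) asserts that stability gives ``geometric control strong enough to pass the curvature measures $\Phi_i^{(j)}$ to a weak limit''; this is exactly what is unknown for general $i$ (and was only recently proved for $i=n-2$ in \cite{LP}).

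For $i=0$, your closing paragraph invokes the right slogans---additivity of $\chi_c$, multiplicativity of $\chi$ over fibers---but is vague about how multiplicativity is realized when no fibration is available. The paper's route is different and concrete: it takes a good covering $\{U_\alpha\}$ of $X$, lifts it to $M$, and runs the Mayer--Vietoris spectral sequence for $\{\hat U_\alpha\}$. The key geometric input (Proposition \ref{prop:good}, imported from \cite{F:good}) is that each lifted intersection $\hat U_A$ is homotopy equivalent to the regular fiber $F_E$ over the minimal stratum $E$ meeting $U_A$. This converts $\chi(M)$ into $\sum_{E}\chi(F_E)\cdot\bigl(\sum_{p}(-1)^p|\mathring I_E^p|\bigr)$, and the remaining identification of the inner sum with $\chi_c(\mathring E)$ is done via the \v Cech complexes of the restricted good covers of $E$ and of $F=E\setminus\mathring E$, together with the deformation retraction of Proposition \ref{prop:ret}. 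No fibration, no curvature measure, no tube formula; the argument is entirely cohomological, and this is precisely what allows it to go through while the general $V_i$ case does not.
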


Here the strata of $X$ mean (the main parts of) primitive extremal subsets in the sense of Perelman-Petrunin \cite{PP:ext}.
This stratification reflects both the geometric and topological structures of Alexandrov spaces, and is very closely related to the collapsing phenomena.
Note that the number of such strata is finite and that the above quantity $V_i(E)$ has not yet been defined.
It also should be mentioned that the existence of the expected function $F$ was stated in \cite[Theorem 4.6]{A:conj} as an unpublished result of Petrunin.
See \cite{A:conj} for more details and further conjectures.

At present only the following results are known for Conjecture \ref{conj:A}.
\begin{itemize}
\item For $V_n$, the volume: the claim follows from the volume convergence theorem of Burago-Gromov-Perelman \cite[Theorem 10.8]{BGP}.
\item For $V_0$, the Euler characteristic: (1) follows from Perelman's stability theorem \cite{Per:alex} (cf.\ \cite{K:stab}) and (2a) follows from Gromov's Betti number theorem \cite{G:bet} (cf.\ \cite{LS}, \cite{Y:ess}).
\item For $V_{n-2}$, the total scalar curvature: (1) was recently proved by Lebedeva-Petrunin \cite[Subcorollary 1.3]{LP} and (2a) was proved by Petrunin \cite{Pet:int}.
\item The special case of a Riemannian submersion was verified by Alesker \cite{A:conv}.
\item The $2$-dimensional case was verified by Alesker-Katz-Prosanov \cite{AKP'}.
\end{itemize}

In this paper we prove (2b) of Conjecture \ref{conj:A} for the $0$-th intrinsic volume, i.e., the Euler characteristic, not only for Riemannian manifolds but also for Alexandrov spaces.
We denote by $\chi$ the Euler characteristic and by $\chi_c$ the Euler characteristic with compact support, where the coefficient field of singular cohomology is fixed and omitted.

\begin{thm}\label{thm:main}
Let $X$ and $M$ be Alexandrov spaces as in Problem \ref{prob:main}.
Then
\[\chi(M)=\sum_{E\in\mathcal E}\chi_c(\mathring E)\cdot\chi(F_E),\]
where $\mathcal E$ denotes the set of primitive extremal subsets of $X$, $\mathring E$ is the main part of $E$, and $F_E$ is a regular fiber over $E$ in $M$ (see Section \ref{sec:pre} for the definitions).
\end{thm}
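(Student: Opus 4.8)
The plan is to exploit the additivity of the compactly-supported Euler characteristic together with a stratified fibration structure on $M$ over $X$. First I would recall that the primitive extremal subsets $E\in\mathcal E$, via their main parts $\mathring E$, give a finite stratification $X=\bigsqcup_{E\in\mathcal E}\mathring E$ into locally closed subsets, each of which is a topological manifold. The key geometric input, which I expect to be the main obstacle, is to produce over each stratum $\mathring E$ a genuine fiber bundle $\pi_E\colon N_E\to\mathring E$ with fiber $F_E$, where $N_E\subset M$ is the part of $M$ lying over a neighborhood of $\mathring E$ in $X$, and to do this compatibly as $E$ ranges over $\mathcal E$ — i.e.\ so that the pieces $N_E$ fit together to cover $M$ with controlled overlaps. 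This is where collapsing theory enters: one uses the (generalized) Perelman–Yamaguchi type fibration theorems, the rescaling/blow-up arguments near extremal subsets, and the stability theorem applied to the regular part, to show that over the regular stratum (the top-dimensional one) $M$ fibers over an open dense subset of $X$ with a fixed fiber $F_{\mathrm{reg}}$, and then to treat the lower strata inductively by passing to the space of directions / infinitesimal structure, where extremal subsets become extremal subsets of the space of directions and the collapse descends. I would set this up so that "regular fiber over $E$" is well defined up to homotopy (hence up to $\chi$), independent of the point of $\mathring E$ chosen, which is automatic once the bundle structure over the connected-component level is established.

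Granting the stratified fibration, the computation is formal. I would write $\chi(M)=\chi_c(M)$ (valid since $M$ is compact) and use the additivity of $\chi_c$ over the partition $M=\bigsqcup_{E\in\mathcal E}N_E^{\circ}$, where $N_E^{\circ}=\pi_E^{-1}(\mathring E)$ is the preimage of the stratum itself (a locally closed subset of $M$, partitioning $M$ because the $\mathring E$ partition $X$ and $M\to X$ is onto). Thus
\[
\chi(M)=\chi_c(M)=\sum_{E\in\mathcal E}\chi_c\!\left(N_E^{\circ}\right).
\]
For each $E$, $\pi_E\colon N_E^{\circ}\to\mathring E$ is a fiber bundle with fiber $F_E$ over a locally compact space, so by the multiplicativity of $\chi_c$ for fibrations with compact base-slices — more precisely, by the Leray spectral sequence with compact supports, whose $E_2$ page is $H^p_c(\mathring E;\mathcal H^q(F_E))$ and which for a locally trivial bundle with finite-type fiber gives $\chi_c(N_E^{\circ})=\chi_c(\mathring E)\cdot\chi(F_E)$ — I obtain the stated identity
\[
\chi(M)=\sum_{E\in\mathcal E}\chi_c(\mathring E)\cdot\chi(F_E).
\]
The one subtlety here is that the fiber $F_E$ may not be connected and the bundle may have nontrivial monodromy on $H^*(F_E)$; but $\chi(F_E)=\sum_q(-1)^q\dim\mathcal H^q(F_E)$ is monodromy-invariant, so the product formula survives, and the multiplicativity of $\chi_c$ in a fibration holds with no orientability hypothesis once we work with the alternating sum.

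The remaining point is to check that all spaces involved have finite-dimensional total cohomology so that every Euler characteristic is defined: $M$ is a compact Alexandrov space, hence has finite total Betti number (by Gromov's Betti number theorem, or simply because it is a compact finite-dimensional absolute neighborhood retract); each $\mathring E$ is a manifold of finite type with a definable / semialgebraic-like structure coming from the MCS (multiple conical singularity) structure of $X$, so $\chi_c(\mathring E)$ is well defined; and each regular fiber $F_E$ is itself a compact Alexandrov-type space (a fiber of a collapse), hence of finite total Betti number. I would organize the write-up as: (i) recall extremal subsets, their main parts, the MCS structure, and the stratification $X=\bigsqcup\mathring E$; (ii) construct the regular fibers $F_E$ and the stratified fibration of $M$ over $X$, by induction on dimension via the space of directions, invoking the relevant fibration and stability theorems — this is the technical heart; (iii) conclude by the additivity–multiplicativity computation above. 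The hard part is unquestionably step (ii): making the local fibrations over strata canonical enough, and patching them across adjacent strata, so that the preimage decomposition $M=\bigsqcup_E\pi_E^{-1}(\mathring E)$ is genuinely a disjoint union of fiber bundles and not merely a homotopy-theoretic approximation.
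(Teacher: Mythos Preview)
Your step~(ii) is a genuine gap, not just a technical chore. A stratified fibration $M\to X$ with honest fiber bundles over each $\mathring E$, glued compatibly so that $M=\bigsqcup_E\pi_E^{-1}(\mathring E)$, is precisely the structure that is \emph{not} known to exist in general dimension; the paper says so explicitly in the introduction. Perelman's Serre fibration theorem applies only when $X$ has no proper extremal subsets, and the Yamaguchi-type fibration theorems require strong regularity on $X$. There is no available ``induction via spaces of directions'' that produces bundles over lower strata and patches them across adjacent strata. So the formal additivity--multiplicativity computation you wrote down, while correct once the fibration is granted, rests on an input that is an open problem.

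The paper bypasses this entirely. Instead of a map $M\to X$, it takes a good covering $\{U_\alpha\}$ of $X$ and lifts it to $\{\hat U_\alpha\}$ on $M$. The substitute for your bundle structure is Proposition~\ref{prop:good}: for each nonempty intersection $U_A$ whose minimal intersecting extremal subset is $E$, the lift $\hat U_A$ has the homotopy type of the regular fiber $F_E$. This is exactly the ``homotopy-theoretic approximation'' you worried might not suffice---but for Euler characteristics it does. One runs the Mayer--Vietoris spectral sequence for $\{\hat U_A\}$, reads off $\chi(M)=\sum_{E}\chi(F_E)\cdot\bigl(\sum_p(-1)^p|\mathring I_E^p|\bigr)$ at the $E_1$ page, and then identifies the combinatorial factor with $\chi_c(\mathring E)$ by comparing \v Cech complexes for $E$ and $F=E\setminus\mathring E$ and invoking the deformation retraction of Proposition~\ref{prop:ret} to get $H^*(E,F)\cong H^*_c(\mathring E)$. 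No global fibration, no patching, no decomposition of $M$ into locally closed pieces is needed.
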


For a regular (= nonsingular) fibration, the Euler characteristic of the total space splits into the product of those of the fiber and the base space.
Therefore, the above formula can be interpreted as capturing the expected singular fibration structure of $M$ over $X$, at least at the level of the Euler characteristic.

\begin{rem}
If $k=n$, then all the regular fibers are contractible.
This follows from the parametrized version of Perelman's stability theorem \cite[Theorem 4.3]{Per:alex} (cf. \cite[Theorem 7.8]{K:stab}).
In particular we have $\chi(X)=\sum_{E\in\mathcal E}\chi_c(\mathring E)$.
\end{rem}

\begin{rem}
If $k<n$ and $M$ is a Riemannian manifold with uniform two-sided bounds on sectional curvature (independent of the Gromov-Hausdorff distance $\mu$), then the Euler characteristic of $M$ vanishes.
This follows from \cite[Proposition 1.5]{CG:i} and \cite[Theorem 0.1]{CG:ii}.
\end{rem}

\begin{rem}
Gromov's Betti number theorem \cite{G:bet} and its generalization to Alexandrov spaces (\cite{LS}, \cite{Y:ess}) tell us that $\chi(M)$ is uniformly bounded in terms of dimension, a lower curvature bound, and an upper diameter bound.
As we will see later, this also holds for $\chi(F_E)$ in the above formula.
In particular, if $M_j$ is a sequence of Alexandrov spaces of fixed dimension that converges to $X$, then after passing to a subsequence, one can assume that all the Euler characteristics appearing in the formula of Theorem \ref{thm:main} are independent of the sequence.
This demonstrates Conjecture \ref{conj:A}(2b).
See Appendix \ref{sec:A} for details.
\end{rem}

This paper is actually a continuation (or addendum) of \cite{F:good}, where the author proved Theorem \ref{thm:main} in the case when $X$ has no proper extremal subsets.
Indeed the geometric ingredients needed for the proof have already been obtained in the previous paper, and what we do here is a purely topological argument.
In \cite{F:good} the author combined Perelman's Serre fibration theorem \cite{Per:col} with the good covering method developed by Mitsuishi-Yamaguchi \cite{MY:good}.
This makes it possible to compute the cohomology of $M$ via a spectral sequence associated with a presheaf on a good covering of $X$.
This presheaf is in a sense constant on the strata of $X$, and hence the Euler characteristic splits on each stratum as in the case of a regular fibration.
This reduces the computation of the Euler characteristic of $M$ to that of $X$ in terms of its strata.
The latter is done with the help of another result of the author \cite{F:reg}, which showed that each stratum has a deformation retract neighborhood.

\step{Organization}
In Section \ref{sec:pre} we fix notation and recall some results from \cite{F:good} and \cite{F:reg}.
In Section \ref{sec:prf} we prove Theorem \ref{thm:main}.
In Appendix \ref{sec:A}, to complete the proof of Alesker's conjecture, we construct the integer valued function $F$ on the strata of the limit space asserted in Conjecture \ref{conj:A}(2b).
As already mentioned, this is an unpublished result of Anton Petrunin.

\step{Acknowledgment}
I would like to thank Professor Semyon Alesker for his interest in my work.

\section{Preliminaries}\label{sec:pre}

The reader is assumed to be familiar with the theory of Alexandrov spaces and extremal subsets (see \cite[Section 3]{F:good} for a brief summary).
Here we fix notation and recall some results from \cite{F:good} and \cite{F:reg}.

Let $X$ and $M$ be Alexandrov spaces as in Problem \ref{prob:main} that are $\mu$-close in the Gromov-Hausdorff distance.
Note that $X$ is a fixed space whereas $M$ is a variable space depending on the choice of $\mu$.
We fix a $\mu$-approximation between $X$ and $M$ and use the hat symbol $\hat\ $ to indicate lifts from $X$ to $M$.
For example, for $p\in X$, we denote by $\hat p\in M$ a point that is $\mu$-close to $p$ under this approximation.

Recall that the distance function from $p\in X$ is called \textit{regular} at $x\in X\setminus\{p\}$ if $\tilde\angle pxy>\pi/2$ for some $y\in X$, where $\tilde\angle$ denotes the comparison angle.
Note that there exists a neighborhood of $p$ on which the distance function from $p$ is regular except at $p$.
We denote by $\bar B(p,r)$ the closed $r$-ball around $p$.

\begin{dfn}\label{dfn:fbr}
Let $p\in X$ and let $r>0$ be such that the distance function from $p$ is regular on $\bar B(p,r)\setminus\{p\}$.
We call such a pair $(p,r)$ \textit{fiber data} on $X$.
We say that $\bar B(\hat p,r)$ is a \textit{regular fiber} over $p$ in $M$, provided $\mu\ll r$.
\end{dfn}

\begin{rem}
The choice of $\mu$ will be determined in the proof of each statement.
\end{rem}

\begin{rem}
Fiber data is defined on a fixed space $X$, whereas its regular fiber depends on a variable space $M$.
\end{rem}

\begin{rem}\label{rem:fbr}
One can also use the open ball $B(\hat p,r)$ instead of the closed ball to define a regular fiber.
Indeed, they are homotopy equivalent by Perelman's fibration theorem (\cite[Theorem 1.4.1]{Per:alex}, \cite[Theorem 1.4(B)]{Per:mor}).
\end{rem}

Let $E$ be an extremal subset of $X$ (see \cite[Section 3C]{F:good} for extremal subsets).
By definition, $E$ is closed under the gradient flow of any semiconcave function.
The family of extremal subsets is closed under taking union, intersection, and closure of difference.
The number of extremal subsets in $X$ is finite.
In fact, it is uniformly bounded in terms of dimension, a lower curvature bound, and an upper diameter bound (\cite[Theorem 4.5]{A:conj}, \cite[Theorem 1.1(1)]{F:uni}).

Recall that $E$ is called \textit{primitive} if it cannot be represented as a union of two proper extremal subsets.
For a primitive extremal subset $E$, its \textit{main part} $\mathring E$ is defined as the relative complement of all proper extremal subsets in $E$.
Note that $p\in\mathring E$ if and only if $E$ is the minimal extremal subset containing $p$, that is, the intersection of all extremal subsets containing $p$, which is primitive.
It is known that $\mathring E$ is a topological manifold.
Therefore the main parts of all primitive extremal subsets define a stratification of $X$ (\cite[Section 3.8]{PP:ext}).

The next lemma shows that regular fibers are constant on each stratum of $X$.

\begin{lem}[{\cite[Lemma 5.11]{F:good}}]\label{lem:fbr}
Let $E$ be a primitive extremal subset of $X$.
Let $(p,r)$ and $(q,s)$ be fiber data on $X$ such that $p,q\in\mathring E$.
If $\mu$ is small enough, then the regular fibers $\bar B(p,r)$ and $\bar B(q,s)$ are homotopy equivalent.
\end{lem}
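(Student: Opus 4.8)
The plan is to reduce the lemma to two local assertions and then propagate them along a path. The assertions are: (i) for a fixed $p\in\mathring E$ the homotopy type of the regular fiber $\bar B(\hat p,r)$ does not depend on the admissible radius $r$; and (ii) if $x,y\in\mathring E$ are sufficiently close and $\delta$ is sufficiently small, then $\bar B(\hat x,\delta)\simeq\bar B(\hat y,\delta)$. Granting these, join $p$ and $q$ by a path $\gamma$ in $\mathring E$ (recall that $\mathring E$ is connected), and use the local structure of $X$ along $\mathring E$ to choose a radius $\rho>0$ such that $(\gamma(t),\rho)$ is fiber data, with a uniform regularity constant, for every $t$. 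Cover $\gamma([0,1])$ by finitely many balls of radius $\rho/100$ centered at $p=x_0,x_1,\dots,x_N=q$, and apply (ii) with $\delta=\rho/10$ to each consecutive pair; chaining the resulting equivalences and then invoking (i) at $p$ and at $q$ yields $\bar B(\hat p,r)\simeq\bar B(\hat p,\rho/10)\simeq\cdots\simeq\bar B(\hat q,\rho/10)\simeq\bar B(\hat q,s)$. As only finitely many comparisons are involved, the threshold for $\mu$ can be fixed at the very end.

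Assertion (i), and the reductions above, rest on two standard ingredients. The first is the transfer of regularity from $X$ to $M$: if $d_p$ is regular on a closed annulus $\{a\le d_p\le b\}\subset X$ with $a>0$, a compactness argument produces finitely many points of $X$ realizing a uniform lower bound $\pi/2+\eta$ for the comparison angles witnessing regularity there, and since comparison angles vary continuously under the fixed $\mu$-approximation, $d_{\hat p}$ is regular on $\{a\le d_{\hat p}\le b\}\subset M$ as soon as $\mu\ll a,\eta$. The second is Perelman's fibration theorem / Morse theory for distance functions (\cite[Theorem 1.4.1]{Per:alex}, \cite[Theorem 1.4(B)]{Per:mor}): regularity of $d_{\hat p}$ on such an annulus implies that $\bar B(\hat p,b')$ deformation retracts onto $\bar B(\hat p,a')$ whenever $a\le a'\le b'\le b$. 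Taking $a$ arbitrarily small gives (i), and running the same two ingredients along the compact path $\gamma$ effects the reduction to (ii).

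The crux is (ii). Let $c:=d(x,y)$, taken much smaller than the radii in play, fix $\delta$ with $c\ll\delta$ and with $(x,\delta+3c)$ and $(y,\delta+3c)$ fiber data, and set $c_M:=d(\hat x,\hat y)\le c+2\mu$. By the triangle inequality one has the chain of inclusions of closed balls
\[
\bar B(\hat x,\delta-c_M)\hookrightarrow\bar B(\hat y,\delta)\hookrightarrow\bar B(\hat x,\delta+c_M)\hookrightarrow\bar B(\hat y,\delta+2c_M).
\]
For $\mu$ small, the composite of the first two inclusions is the inclusion $\bar B(\hat x,\delta-c_M)\hookrightarrow\bar B(\hat x,\delta+c_M)$, a homotopy equivalence by the regularity-transfer and Morse-theory argument applied to $d_{\hat x}$; likewise the composite of the last two is $\bar B(\hat y,\delta)\hookrightarrow\bar B(\hat y,\delta+2c_M)$, a homotopy equivalence via $d_{\hat y}$. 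Hence the middle inclusion $i\colon\bar B(\hat y,\delta)\hookrightarrow\bar B(\hat x,\delta+c_M)$ induces a surjection on every homotopy group (being the second map of a composite that induces an isomorphism on homotopy groups) and an injection (being the first map of another such composite), so $\pi_* i$ is an isomorphism. Since these closed balls have the homotopy type of finite CW complexes, Whitehead's theorem shows that $i$ is a homotopy equivalence; combined with (i) this gives $\bar B(\hat y,\delta)\simeq\bar B(\hat x,\delta+c_M)\simeq\bar B(\hat x,\delta)$, which is (ii).

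The main obstacle is the uniformity needed to keep $\mu$ fixed throughout: one must know that fiber data along a path in $\mathring E$ can be chosen with radius and regularity constant bounded away from $0$, so that a single choice of $\mu$ validates all the finitely many transfers of regularity. This is precisely where the hypothesis $p,q\in\mathring E$, rather than merely $p,q\in X$, is used, through the description of the local geometry of $X$ along a stratum in \cite{PP:ext} together with the deformation-retract neighborhood results of \cite{F:reg}; the bookkeeping of radii in the displayed chain of inclusions is then routine.
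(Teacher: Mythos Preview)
The paper does not prove this lemma; it is quoted from \cite[Lemma 5.11]{F:good} and used as a black box. So there is no proof in the present paper to compare against. That said, your outline is the natural one and is almost certainly close in spirit to the argument in \cite{F:good}: reduce to independence of radius via Perelman's fibration theorem, then propagate along a chain of nearby points in $\mathring E$ using interleaved balls and Whitehead's theorem. The interleaving argument for (ii) is clean and correct.

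There is, however, a point you should tighten. Your step (ii) and the chaining work verbatim for \emph{any} two nearby points of $X$, not just points of $\mathring E$; if the ``uniform $\rho$ along the path'' could always be arranged, your argument would show that all regular fibers over a connected $X$ are homotopy equivalent, which is false (take $X=[0,1]$ collapsing from a lens: the fiber over an interior point is a circle, over an endpoint a disk). The obstruction is exactly that the fiber-data radius at $t\in(0,1)$ is $\min(t,1-t)$, which tends to $0$ as $t\to 0$, so no chain reaching the endpoint satisfies the hypotheses of (ii). You are aware of this and correctly isolate it as ``the main obstacle'', but the references you invoke do not obviously settle it: \cite{F:reg} concerns deformation-retract neighborhoods of extremal subsets, and the cited structure from \cite{PP:ext} concerns regularity of $d_F$ for an extremal $F$, neither of which directly gives a positive lower bound for the fiber-data radius of $d_p$ along a compact arc in $\mathring E$. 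What is actually needed is that this radius is bounded below on compact subsets of a fixed stratum; this does hold, but it deserves an explicit argument (e.g.\ via stability of the space of directions along $\mathring E$, or a direct compactness/contradiction argument using that critical points of $d_p$ close to $p$ force $p$ to lie in a smaller extremal subset), rather than a citation that does not quite match.

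A minor point: you assert that $\mathring E$ is connected. This is true for primitive $E$ (it is part of the stratification picture in \cite[\S3.8]{PP:ext}), but it is worth citing, since without it the path argument does not get started.
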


Note that the choice of $\mu$ depends on $r$ and $s$, and hence on $p$ and $q$.
This lemma allows us to define the notion of a \textit{regular fiber} over $E$ up to homotopy equivalence, which appeared in Theorem \ref{thm:main}.
Strictly speaking, we choose some fiber data $(p,r)$ such that $p\in\mathring E$ for each primitive extremal subset $E$ of $X$, and then take $\mu$ to be sufficiently small so that the conclusion of the theorem holds.

Let $\{U_\alpha\}_{\alpha=1}^N$ be a good covering of $X$ (see \cite[Section 3D]{F:good} for good coverings).
Recall that each $U_\alpha$ is a superlevel set of a strictly concave function constructed from distance functions.
For any nonempty subset $A\subset\{1,\dots,N\}$, we denote by $U_A$ the intersection of all $U_\alpha$ such that $\alpha\in A$.
We also denote by $I$ the set of $A$ such that $U_A$ is nonempty.
For any $A\in I$, $U_A$ is contractible by gradient flows of semiconcave functions.
In particular it is a good cover in the topological sense.
Since $U_A$ is defined by distance functions, it can be lifted to $M$, denoted by $\hat U_A$.

Let $E$ be an extremal subset of $X$ (not necessarily primitive).
We denote by $I_E$ the set of $A\in I$ such that $U_A$ intersects $E$.
Moreover, if $E$ is primitive, we denote by $\mathring I_E$ the set of $A\in I_E$ such that $U_A$ does not intersect $E\setminus\mathring E$.
Since $F=E\setminus\mathring E$ is also extremal, we see that $A\in\mathring I_E$ if and only if $E$ is the minimal extremal subset intersecting $U_A$.
Clearly $I_E$ is the disjoint union of $\mathring I_E$ and $I_F$.

For any extremal subset $E$ of $X$, the restricted cover $\{U_\alpha\cap E\}_{\alpha\in I_E}$ is also good in the topological sense.
This follows from the fact that any gradient flow preserves extremal subsets.
Note that $A=\{\alpha_1,\dots,\alpha_k\}\in I_E$ if and only if $A\in I$ and $\{\alpha_i\}\in I_E$ for any $1\le i\le k$ (see the paragraph after the proof of \cite[Lemma 5.14]{F:good}).
In other words, $I_E$ defines a full subcomplex of $I$ in the associated nerve, but we will not use this fact (cf.\ Proposition \ref{prop:ret}).

The following proposition is the key ingredient in the proof of Theorem \ref{thm:main}.

\begin{prop}[{\cite[Proposition 5.13]{F:good}}]\label{prop:good}
Let $E$ be a primitive extremal subset of $X$.
If $A\in \mathring I_E$, then $\hat U_A$ has the homotopy type of a regular fiber over $E$.
More precisely, for any fiber data $(p,r)$ such that $p\in\mathring E$, if $\mu$ is small enough, $\hat U_A$ is homotopy equivalent to $\bar B(\hat p,r)$.
\end{prop}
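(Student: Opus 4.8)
The plan is to prove the sharper local statement that $\hat U_A$ is homotopy equivalent to a small metric ball in $M$ around the lift of the ``center'' of $U_A$, to observe that this center lies in $\mathring E$, and then to finish with Lemma~\ref{lem:fbr}.

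First I would locate that center. Recall that $U_A$ is an intersection of superlevel sets of strictly concave functions, and can be written as $U_A=\{h_A\ge 0\}$ for a semiconcave function $h_A$ (a regularized minimum of those functions) built from distance functions whose gradient flow contracts $U_A$; in particular $h_A$ attains its maximum at a single point $m_A$ in the interior of $U_A$, this is its only critical point in $U_A$, and the gradient flow of $h_A$ deformation retracts $U_A$ onto every superlevel set $\{h_A\ge c\}$ with $0<c<\max h_A$. Since $A\in\mathring I_E$ we have $\emptyset\ne U_A\cap E\subset\mathring E$; because the gradient flow of $h_A$ preserves the extremal subset $E$ and carries $U_A$ into $\{m_A\}$, it follows that $m_A\in E$, hence $m_A\in\mathring E$. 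Next I would choose $\delta>0$ so small that the distance function from $m_A$ is regular on $\bar B(m_A,\delta)\setminus\{m_A\}$ and $\bar B(m_A,\delta)\subset\{h_A>0\}$; then $(m_A,\delta)$ is fiber data on $X$, so $\bar B(\hat m_A,\delta)$ is a regular fiber over $m_A$ in the sense of Definition~\ref{dfn:fbr} once $\mu\ll\delta$. Finally I would pick $c\in(0,\max h_A)$ with $\{h_A\ge c\}\subset B(m_A,\delta)$ and then $\delta'\in(0,\delta)$ with $\bar B(m_A,\delta')\subset\{h_A>c\}$, so that on $X$ we have the nesting $\bar B(m_A,\delta')\subset\{h_A\ge c\}\subset\bar B(m_A,\delta)\subset U_A$.

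The core of the argument is to transfer this picture to $M$. On $X$, the function $h_A$ is regular on the compact set $\{0\le h_A\le c\}$, which avoids the unique critical point $m_A$, and $d_{m_A}$ is regular on the closed annulus $\bar B(m_A,\delta)\setminus B(m_A,\delta')$. Since regularity of admissible functions is an open condition stable under Gromov--Hausdorff approximation (Perelman \cite{Per:alex}), for $\mu$ small the corresponding function $\hat h_A$ on $M$ is regular on the corresponding region, $d_{\hat m_A}$ is regular on $\bar B(\hat m_A,\delta)\setminus B(\hat m_A,\delta')$, and (after routine Gromov--Hausdorff bookkeeping with the radii) the analogous nesting $\bar B(\hat m_A,\delta')\subset\{\hat h_A\ge c\}\subset\bar B(\hat m_A,\delta)\subset\hat U_A=\{\hat h_A\ge 0\}$ holds. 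Then, by Perelman's fibration theorem (\cite{Per:alex}, \cite{Per:mor}) --- concretely, the gradient flow of $\hat h_A$ retracts $\hat U_A$ onto $\{\hat h_A\ge c\}$, and regularity of $d_{\hat m_A}$ on the annulus retracts $\bar B(\hat m_A,\delta)$ onto $\bar B(\hat m_A,\delta')$ --- the composite inclusions $\{\hat h_A\ge c\}\hookrightarrow\hat U_A$ and $\bar B(\hat m_A,\delta')\hookrightarrow\bar B(\hat m_A,\delta)$ are homotopy equivalences. A standard three-term sandwich argument then forces every inclusion in the above chain to be a homotopy equivalence; in particular $\hat U_A\simeq\bar B(\hat m_A,\delta)$. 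Since $m_A\in\mathring E$, Lemma~\ref{lem:fbr} identifies $\bar B(\hat m_A,\delta)$, up to homotopy, with $\bar B(\hat p,r)$ for any fiber data $(p,r)$ with $p\in\mathring E$, provided $\mu$ is small enough (now depending also on $p$ and $r$), and combining gives $\hat U_A\simeq\bar B(\hat p,r)$.

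The hard part is the transfer step, not the bookkeeping at the end. One must fix the radii $\delta$, $c$, $\delta'$ in the right order, all determined by the fixed space $X$ alone, and only afterwards take $\mu$ smaller than all of them; and one must carefully invoke Gromov--Hausdorff stability of regularity together with Perelman's fibration theorem to promote the gradient-flow deformation retractions visibly available on $X$ to honest homotopy equivalences on the collapsed space $M$ (where $\hat U_A$ is typically far from contractible). By contrast, the combinatorial (nerve) structure of the good cover is irrelevant here: the only features used are that $U_A$ is a strictly concave superlevel set and that, by the hypothesis $A\in\mathring I_E$, it meets $\mathring E$ but no proper extremal subset of $E$.
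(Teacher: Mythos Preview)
The paper does not give its own proof of this proposition; it is quoted verbatim from \cite[Proposition~5.13]{F:good} as a preliminary result. Your argument is correct and is precisely the natural one: locate the peak $m_A\in\mathring E$ of the strictly concave function defining $U_A$ using invariance of extremal subsets under gradient flows, set up a four-term nesting $\bar B(m_A,\delta')\subset\{h_A\ge c\}\subset\bar B(m_A,\delta)\subset U_A$ on $X$, lift it to $M$ by Gromov--Hausdorff stability of regularity, and apply Perelman's fibration theorem plus a sandwich/Whitehead argument to conclude $\hat U_A\simeq\bar B(\hat m_A,\delta)$, then finish with Lemma~\ref{lem:fbr}. This is essentially the approach of \cite{F:good}, and your commentary correctly identifies the nontrivial step (the GH transfer and the choice of constants in the right order, all on $X$ before choosing $\mu$). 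One small point worth making explicit for the sandwich step: the spaces involved are ANRs (hence have CW homotopy type), so that weak equivalences from the $\pi_*$ argument upgrade to genuine homotopy equivalences.
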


We also use the following fact from \cite{F:reg}.

\begin{prop}[{\cite[Theorem 1.5]{F:reg}}]\label{prop:ret}
Let $E$ and $F$ be extremal subsets of $X$ such that $F\subset E$.
Then any sufficiently small metric neighborhood of $F$ in $E$ admits a deformation retraction to $F$.
\end{prop}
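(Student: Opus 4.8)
Write $\rho=\operatorname{dist}(\cdot,F)$. The plan is to show that for sufficiently small $\delta>0$ the distance function $\rho$ is noncritical on $\bar B(F,\delta)\cap(E\setminus F)$, and then to collapse this neighborhood onto $F$ along a continuous ``inward'' flow built from $\rho$: integrating the flow for the position-dependent time $\rho$ should carry every point into $F$ and yield the deformation retraction. No full conical or fiber-bundle description of a neighborhood of $F$ is needed, only enough regularity to run the flow.

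For the regularity, suppose to the contrary that there are critical points $x_i$ of $\rho$ with $\rho(x_i)\to 0$; choose nearest points $p_i\in F$ and blow up the metric by $1/\rho(x_i)$ with basepoints $p_i$. Passing to a limit one obtains a tangent cone $T=C(\Sigma)$ of $X$ together with subcones $T_E\supset T_F$ coming from the extremal subsets $\Sigma_pE\supset\Sigma_pF$ in $\Sigma_pX$ (extremality passes to spaces of directions, and extremal subsets converge to their tangent cones under such blow-ups), and the rescaled $x_i$ subconverge to a critical point of $\operatorname{dist}(\cdot,T_F)$ lying off $T_F$. But on the cone $T$ one has $\operatorname{dist}(t\xi,T_F)=t\sin\angle(\xi,\Sigma_F)$ when $\angle(\xi,\Sigma_F)\le\pi/2$ and $=t$ otherwise, whose radial derivative is strictly positive off $T_F$, so the outward radial direction makes angle $>\pi/2$ with the shortest path to $T_F$; hence no such critical point exists, a contradiction. (A little care is needed to keep the rescaling factors tending to zero and the basepoints under control, but this is routine, as in the proof that small balls carry no critical points of a distance function.)

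Granting the regularity, one assembles, following the Perelman--Petrunin scheme (a smooth partition of unity applied to local escape directions), a continuous vector field $V$ on $\bar B(F,\delta)\cap E$ with $d\rho(V)\equiv-1$ on $E\setminus F$, of bounded norm, and tangent to $E$ so that its flow $\Phi^t$ preserves $E$ (here extremality of $E$ is used). Then $\Phi^t$ decreases $\rho$ at unit rate, reaches $F$ at time exactly $\rho(x)$, and converges as $t\uparrow\rho(x)$ to a point $\pi(x)\in F$; setting $H(x,s)=\Phi^{s\rho(x)}(x)$ for $s<1$, $H(x,1)=\pi(x)$, $H(x,0)=x$ gives the candidate deformation retraction onto $F$ that fixes $F$ pointwise. (Alternatively one could try to identify $\bar B(F,\delta)\cap E$ with the mapping cylinder of $\pi\colon\{\rho=\delta\}\cap E\to F$, using a fibration theorem for extremal subsets on $\{0<\rho\le\delta\}\cap E$, but the flow is more direct.)

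The main obstacle is the continuity of $H$ at $s=1$ and along $F$: one must rule out that trajectories starting near a point of $F$ spread apart as they approach $F$, i.e.\ that $\pi$ extends continuously over $F$. This needs a control of the geometry of the triple $(X,E,F)$ near $F$ that is uniform over $F$ --- in effect a uniform version of the conical picture used above --- and is where the extremality of $F$ and the compactness of $X$ must be combined. The remaining points (genuine continuity of $V$, since gradients are only defined pointwise; independence of the homotopy type of $\bar B(F,\delta)\cap E$ from $\delta$) are standard once the regularity lemma and the flow are in hand.
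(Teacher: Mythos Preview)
The paper does not prove this proposition from scratch: it cites \cite[Theorem 1.5]{F:reg} for the case $E=X$, observes that the deformation retraction there is given by a gradient flow of a semiconcave function, and then simply restricts that flow to $E$ (which is preserved because gradient flows preserve extremal subsets). Your sketch, by contrast, attempts an ab initio argument. The regularity step is fine in spirit --- it is essentially \cite[Lemma 3.1(2)]{PP:ext}, which the paper in fact invokes elsewhere --- but the flow construction has real problems.

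First, there is no ``smooth partition of unity applied to local escape directions'' in an Alexandrov space: there is no smooth structure, and the Perelman--Petrunin machinery does not glue local vector fields this way. The available tool is the gradient flow of a \emph{semiconcave} function, and $\rho=\operatorname{dist}(\cdot,F)$ is not semiconcave, so you cannot directly integrate $-\nabla\rho$. The cited result in \cite{F:reg} works precisely because it manufactures a suitable semiconcave function whose gradient flow pushes a neighborhood into $F$; this is also what makes the flow automatically tangent to every extremal subset and hence allows the one-line reduction from $X$ to $E$. Your ad hoc $V$ has no such guarantee, so even if it existed, the claim that its flow preserves $E$ would need separate justification. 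Second, you yourself flag the continuity of $H$ at $s=1$ and along $F$ as the ``main obstacle'' and do not resolve it; this is not a routine point and is exactly where the careful construction in \cite{F:reg} does the work. As written, the sketch identifies the right difficulties but does not overcome them.
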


The case $E=X$ was proved in \cite[Theorem 1.5]{F:reg}.
Since the deformation retraction of this theorem was given by a gradient flow which preserves extremal subsets, the general case follows by restricting it to $E$.

\begin{rem}\label{rem:ext}
In Lemma \ref{lem:fbr} and Proposition \ref{prop:good}, the homotopy equivalences can be chosen to preserve extremal subsets of $M$.
For example, in Lemma \ref{lem:fbr}, if $G$ is an extremal subset of $M$, then $\bar B(p,r)\cap G$ is homotopy equivalent to $\bar B(q,s)\cap G$ by the restriction of the original homotopy equivalences.
This is because the fibration theorem and the gradient flows used in the proofs preserve extremal subsets (see \cite[Section 9]{K:stab} for the fibration theorem).
This observation allows us to generalize Theorem \ref{thm:main} to each extremal subset of $M$ (see Remark \ref{rem:ext2}).
\end{rem}

\section{Proof}\label{sec:prf}

In this section we prove Theorem \ref{thm:main}.
We refer to \cite[Chapter III]{BT} for the basic theory of spectral sequences.

\begin{proof}[Proof of Theorem \ref{thm:main}]
Let $X$ and $M$ be as in Problem \ref{prob:main}.
Let $\{U_\alpha\}_{\alpha=1}^N$ be a good covering of $X$.
Let $I^p$ be the set of $A\in I$ with cardinality $p+1$ (see Section \ref{sec:pre} for the definition of $I$).
We denote by $S^q(U)$ the module of singular $q$-cochains on $U$, where the coefficient field is fixed and omitted.

We consider the Mayer-Vietoris double complex $\{\prod_{A\in I^p}S^q(\hat U_A)\}_{p,q\ge0}$, that is, the rows have the \v Cech coboundary operator (alternating sum of restrictions) and the columns have the singular coboundary operator.
The spectral sequence associated with this double complex converges to the cohomology of $M$.

Let $\mathcal E$ denote the set of primitive extremal subsets in $X$.
For each $E\in\mathcal E$, we take fiber data $(p_E,r_E)$ such that $p_E\in\mathring E$ and denote its regular fiber by $F_E$.
Note that $I$ is the disjoint union of $\mathring I_E$ for all $E\in\mathcal E$ (see Section \ref{sec:pre} for the definition of $\mathring I_E$).
Let $\mathring I^p_E$ be the intersection of $I^p$ and $\mathring I_E$.
Since taking homology does not change the Euler characteristic, by looking at the $E_1$ term of the spectral sequence, we get
\begin{align*}
\chi(M)&=\sum_{p,q\ge 0}\sum_{A\in I^p}(-1)^{p+q}\dim H^q(\hat U_A)\\
&=\sum_{p,q\ge0}\sum_{E\in\mathcal E}\sum_{A\in\mathring I_E^p}(-1)^{p+q}\dim H^q(F_E)\tag{$\because$ Proposition \ref{prop:good}}\\
&=\sum_{E\in\mathcal E}\left(\sum_{q\ge 0}(-1)^q\dim H^q(F_E)\cdot\sum_{p\ge0}\sum_{A\in\mathring I_E^p}(-1)^p\right)\\
&=\sum_{E\in\mathcal E}\left(\chi(F_E)\cdot\sum_{p\ge0}\sum_{A\in\mathring I_E^p}(-1)^p\right).
\end{align*}
Hence it remains to show that
\[\sum_{p\ge 0}\sum_{A\in\mathring I_E^p}(-1)^p=\chi_c(\mathring E).\]

The left hand side $\sum_{p\ge0}\sum_{A\in\mathring I_E^p}(-1)^p$ is the Euler characteristic of the \v Cech complex $C_{\mathring E}=\{\prod_{A\in\mathring I^p_E}C(U_A)\}_{p\ge0}$, where $C(U)$ denotes the module of constant functions on $U$.
Let $F=E\setminus\mathring E$, which is also an extremal subset.
Consider the \v Cech complexes $C_E=\{\prod_{A\in I^p_E}C(U_A)\}_{p\ge0}$ and $C_F=\{\prod_{A\in I^p_F}C(U_A)\}_{p\ge0}$.
Note that $C_{\mathring E}$ is the quotient of $C_E$ by $C_F$.
Recall that $\{U_\alpha\cap E\}_{\alpha\in I_E}$ and $\{U_\alpha\cap F\}_{\alpha\in I_F}$ are topological good covers of $E$ and $F$, respectively.
Hence the cohomologies of $C_E$ and $C_F$ are isomorphic to the singular cohomology groups $H^\ast(E)$ and $H^\ast(F)$, respectively, via spectral sequences.
Moreover, since the contractions of the good covers are given by the same gradient flows, these natural isomorphisms commute with the horizontal exact sequences in the following diagram:
\[
\begin{CD}
@>>>	H^*(C_{\mathring E})	@>>>	H^*(C_E)	@>>>	H^*(C_F)	@>>>	\\
@.		@VVV						@VVV				@VVV				\\
@>>>	H^*(E,F)				@>>>	H^*(E)		@>>>	H^*(F)		@>>>	.
\end{CD}
\]
Therefore $H^*(C_{\mathring E})$ is isomorphic to $H^\ast(E,F)$ (see also the following Remark \ref{rem:lip}).
By Proposition \ref{prop:ret}, $H^\ast(E,F)$ is isomorphic to the cohomology group $H^\ast_c(\mathring E)$ with compact support.
This completes the proof.
\end{proof}

\begin{rem}\label{rem:lip}
More generally, the pair $(E,F)$ is homotopy equivalent to the geometric realization of the pair of the nerves of the restricted good covers $(\{U_\alpha\cap E\}_{\alpha\in I_E},\{U_\alpha\cap F\}_{\alpha\in I_F})$.
This also yields the above isomorphism.
See \cite[Complement 8.4]{FMY} for the proof.
\end{rem}

\begin{rem}
As shown in the above proof, the Betti numbers with compact support of $\mathring E$ are finite.
Moreover, they are uniformly bounded in terms of dimension, a lower curvature bound, and an upper diameter bound, since the Betti numbers of any extremal subset are uniformly bounded in terms of these constants (\cite[Theorem 1.1(2)]{F:uni}).
\end{rem}

\begin{rem}
The Betti numbers without compact support of $\mathring E$ are also uniformly bounded.
Indeed, since the distance function from $F$ is regular near it (\cite[Lemma 3.1(2)]{PP:ext}), the complement of a small metric neighborhood of $F$ in $E$ is a deformation retract of $\mathring E$ by the gradient flow.
This, together with the proof of the uniform boundedness in \cite[Section 5]{F:uni} (cf.\ \cite[Theorem 5.2]{Y:ess}), implies the claim.
\end{rem}

\begin{rem}\label{rem:ext2}
In view of Remark \ref{rem:ext}, if $G$ be an extremal subset of $M$, then
\[\chi(G)=\sum_{E\in\mathcal E}\chi_c(\mathring E)\cdot\chi(F_E\cap G),\]
where we regard $\chi(F_E\cap G)=0$ if $F_E\cap G=\emptyset$.
The proof is the same as above.
\end{rem}

\appendix

\section{}\label{sec:A}

In this appendix, to complete the proof of Alesker's conjecture, we construct the integer valued function $F$ on the strata of the limit space asserted in Conjecture \ref{conj:A}(2b).
This is an unpublished result of Petrunin stated in \cite[Theorem 4.6]{A:conj} without proof.
As before, we fix and omit the coefficient field of singular cohomology.

\begin{thm}[Petrunin]
Let $M_j$ be a sequence of $n$-dimensional Alexandrov spaces converging to a compact Alexandrov space $X$.
Let $\mathcal E$ denote the set of primitive extremal subsets in $X$.
Passing to a subsequence, one can construct a function $F:\mathcal E\to\mathbb Z$ satisfying the following properties:
\begin{enumerate}
\item Let $(p,r)$ be fiber data on $X$ (see Definition \ref{dfn:fbr}) and let $E$ be a primitive extremal subset of $X$ such that $p\in\mathring E$.
Then we have
\[F(E)=\lim_{j\to\infty}\chi(\bar B(p_j,r))\]
for any sequence $p_j\in M_j$ converging to $p$.
\item Assume that there is another way of convergence $M_j\overset{\mathrm{GH}}\longrightarrow X$.
Passing to a subsequence again, one can also construct $F':\mathcal E\to\mathbb Z$ for this convergence.
Then there exists an isometry $\iota:X\to X$ such that the induced bijection $\iota_\ast:\mathcal E\to\mathcal E$ satisfies
\[F=F'\circ\iota_\ast.\]
\end{enumerate}
\end{thm}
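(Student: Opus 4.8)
The plan is to define $F(E)$ as the eventually constant value of the Euler characteristic of one fixed regular fiber over $E$, to deduce (1) from Lemma \ref{lem:fbr} (which says that inside a single $M_j$ all regular fibers over the main part of $E$ are homotopy equivalent), and to deduce (2) by extracting a limiting isometry from the two systems of Gromov-Hausdorff approximations. For the construction: for each $E\in\mathcal E$ fix fiber data $(p_E,r_E)$ with $p_E\in\mathring E$ and choose any $p_{E,j}\in M_j$ with $p_{E,j}\to p_E$. The uniform boundedness of the Euler characteristics of regular fibers (a local version of Gromov's Betti number theorem; cf.\ the remarks following Theorem \ref{thm:main}) bounds $|\chi(\bar B(p_{E,j},r_E))|$ independently of $j$; since $\mathcal E$ is finite and these are integers, after a diagonal subsequence each sequence $\bigl(\chi(\bar B(p_{E,j},r_E))\bigr)_j$ is eventually constant, and I define $F(E)$ to be that constant. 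Along this subsequence $M_j$ is $\mu_j$-close to $X$ with $\mu_j\to0$, so for $j$ large $p_{E,j}$ may be regarded as a lift $\hat p_E$ and $\bar B(p_{E,j},r_E)$ is a genuine regular fiber over $E$.

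To prove (1), take any fiber data $(p,r)$ with $p\in\mathring E$ and any $p_j\to p$. For $j$ large $\mu_j$ is as small as we wish and $p_j$ is a lift of $p$, so Lemma \ref{lem:fbr}, applied inside the single space $M_j$ (with the approximation arranged so that $p_j$ and $p_{E,j}$ are the designated lifts), yields a homotopy equivalence $\bar B(p_j,r)\simeq\bar B(p_{E,j},r_E)$; hence $\chi(\bar B(p_j,r))=F(E)$ for all large $j$, which is the assertion, and in particular the limit does not depend on the sequence $p_j\to p$ or on $r$.

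To prove (2), repeat the construction for the second convergence to obtain $F'$, and pass to a common further subsequence along which both $F$ and $F'$ are realized. Realize the two convergences by correspondences $R_j\subset X\times M_j$ and $S_j\subset X\times M_j$ whose distortions tend to $0$, and let $\phi_j:X\to X$ send $x$ to some $x'$ for which there is $m\in M_j$ with $(x,m)\in R_j$ and $(x',m)\in S_j$; then $\phi_j$ is an $\varepsilon_j$-isometry of $X$ with $\varepsilon_j\to0$. Choosing a countable dense subset of $X$ and passing once more to a subsequence, $\phi_j$ converges there pointwise to a distance-preserving map, which extends to a distance-preserving $\iota:X\to X$; as $X$ is compact $\iota$ is surjective, hence an isometry, and it permutes the primitive extremal subsets and their main parts, inducing the bijection $\iota_\ast:\mathcal E\to\mathcal E$. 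Finally, for $E$, fiber data $(p,r)$ with $p\in\mathring E$, and a sequence $p_j\to p$ for the first convergence, the definition of $\phi_j$ shows that the same points $p_j$ satisfy $p_j\to\iota(p)$ for the second convergence, while $(\iota(p),r)$ is fiber data with $\iota(p)$ in the main part of $\iota_\ast(E)$; applying (1) to both convergences along $p_j$ gives $F(E)=\lim_j\chi(\bar B(p_j,r))=F'(\iota_\ast(E))$, i.e.\ $F=F'\circ\iota_\ast$.

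I expect the main obstacle to be the well-definedness of $F$: Lemma \ref{lem:fbr} compares regular fibers only within a single $M_j$, so producing a single integer $F(E)$ forces one to compare fibers across the different spaces $M_j$, and it is precisely here that the uniform Betti bound and the passage to a subsequence are unavoidable, there being no stability theorem for collapsing fibers. A lesser difficulty is making the limiting isometry $\iota$ and the implication ``$p_j\to p$ for the first convergence'' $\Rightarrow$ ``$p_j\to\iota(p)$ for the second'' precise; this is routine but demands care about the order in which the various subsequences are extracted.
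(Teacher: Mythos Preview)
Your proposal is correct and follows essentially the same approach as the paper: fix reference fiber data, use the uniform Betti bound on regular fibers together with finiteness of $\mathcal E$ to extract a subsequence and define $F$, invoke Lemma \ref{lem:fbr} for (1), and for (2) build the isometry $\iota$ by a diagonal/limiting argument from the two systems of approximations and then apply (1) on both sides. The only cosmetic difference is that you package the construction of $\iota$ via auxiliary $\varepsilon_j$-isometries $\phi_j$ coming from composed correspondences, whereas the paper phrases the same diagonal argument directly in terms of limits of lifted points; these are equivalent, and your closing caveat about the order of subsequence extractions correctly identifies the only point requiring care.
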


\begin{rem}
In (1) the sufficiently large index $j$ at which $\chi(\bar B(p_j,r))$ becomes constant depends on $r$ and hence on $p$.
Note that $r$ is smaller than the distance from $p$ to the extremal subset $E\setminus\mathring E$.
\end{rem}

\begin{rem}
As in Remark \ref{rem:fbr}, one can use open balls instead of closed balls in the above statement and the following proof.
\end{rem}

\begin{rem}
Alesker mentioned that the function $F$ will be independent of the choice of the coefficient field.
\end{rem}

\begin{proof}
For each $E\in\mathcal E$, we take fiber data $(p_E,r_E)$ such that $p_E\in\mathring E$ as in the proof of the main theorem.
Let $p_E^j\in M_j$ be a sequence converging to $p_E$.
By the generalization of Gromov's Betti number theorem to Alexandrov spaces (\cite{LS}, \cite{Y:ess}), the total Betti number of the regular fiber $\bar B(p_E^j,r_E)$ is uniformly bounded independent of $j$.
Indeed, by \cite[Theorem 5.2]{Y:ess}, the rank of the inclusion homomorphism $H^\ast(\bar B(p_E^j,r_E/2))\to H^\ast(\bar B(p_E^j,r_E))$ is uniformly bounded in terms of dimension and a lower curvature bound (note that the bound is independent of $r_E$ by rescaling).
Since the distance function from $p_E^j$ is regular on $\bar B(p_E^j,r_E)\setminus B(p_E^j,r_E/2)$ for sufficiently large $j$, the inclusion $\bar B(p_E^j,r_E/2)\hookrightarrow\bar B(p_E^j,r_E)$ is a homotopy equivalence by Perelman's fibration theorem (\cite[Theorem 1.4.1]{Per:alex}, \cite[Theorem 1.4(B)]{Per:mor}).
Hence the total Betti number of this regular fiber is uniformly bounded independent of $j$.
Since the number of primitive extremal subsets is finite, after passing to a subsequence, one can assume that $\chi(\bar B(p_E^j,r_E))$ is constant independent of $j$ for any $E\in\mathcal E$.
We define this value to be $F(E)$.

Let us prove (1).
Let $(p,r)$ be fiber data on $X$ such that $p\in\mathring E$ and assume that $p_j\in M_j$ converges to $p$.
By Lemma \ref{lem:fbr}, $\bar B(p_j,r)$ is homotopy equivalent to $\bar B(p_E^j,r_E)$ for sufficiently large $j$ (depending on $r$).
This implies the first claim.

Let us prove (2).
We define $F'$ for the second convergence in the same way as above.
The isometry $\iota$ is constructed as follows.
For any $p\in X$, take a sequence $p_j\in M_j$ converging to $p$ with respect to the first convergence.
Passing to a subsequence, one can assume that $p_j$ also converges in the second convergence.
This limit point should be $\iota(p)$.
By a standard diagonal argument, one can define $\iota$ on some countable dense subset of $X$.
Clearly $\iota$ preserves distance.
Hence there exists a unique distance-preserving extension of $\iota$ onto $X$.
Since $X$ is compact, it is surjective (see \cite[Theorem 1.6.14]{BBI}).

It is easy to see that $F=F'\circ\iota_\ast$.
Indeed, if $(p,r)$ is fiber data with $p\in\mathring E$, then $(\iota(p),r)$ is also fiber data and $\iota(p)$ is contained in the main part of $\iota_\ast(E)$.
Moreover, if $p_j\in M_j$ is a sequence converging to $p$ in the first convergence, then it also converges to $\iota(p)$ in the second convergence.
These together with the property (1) imply the second claim.
\end{proof}

\begin{rem}\label{rem:ess}
As shown in the first paragraph of the above proof, the total Betti number of a regular fiber is uniformly bounded in terms of dimension and a lower curvature bound.
\end{rem}

\begin{rem}
Although we only consider the Euler characteristics of regular fibers in the above theorem, one can actually consider their Betti numbers.
Furthermore, one can consider the Betti numbers of the intersections of regular fibers with extremal subsets of $M_j$, as in \cite[Theorem 4.6(1)]{A:conj}.
This is because the numbers of extremal subsets in $M_j$ are uniformly bounded (\cite[Theorem 4.5]{A:conj}, \cite[Theorem 1.1(1)]{F:uni}) and Gromov's Betti number theorem also holds for extremal subsets (\cite[Theorem 1.1(2)]{F:uni}).
\end{rem}

\end{document}